\newcommand{\leqs}{\leqslant }
\newcommand{\geqs}{\geqslant }
\newtheorem{lemma}{Lemma}
\newtheorem{proposition}{Proposition}
\newtheorem{theorem}{Theorem}
\begin{document}

\title[Upper bounds for moments of the Riemann zeta-function]{Sharp upper bounds for fractional moments of the Riemann zeta function}

\author{Winston Heap}
\address{Department of Mathematics, University College London, 25 Gordon Street, London WC1H.}
\email{winstonheap@gmail.com}

\author{Maksym Radziwi\l\l}
\address{Department of Mathematics
  Caltech, 1200 E California Blvd
  Pasadena, CA, 91125}
\email{maksym.radziwill@gmail.com}

\author{K. Soundararajan}
\address{Department of Mathematics, Stanford University, Stanford, CA 94305, USA.}
\email{ksound@stanford.edu}

\thanks{The first author is supported by European Research Council grant no.~ 670239. The second author acknowledges the support of a Sloan fellowship.   The third author is partially supported by a grant from the National Science 
Foundation, and by a Simons Investigator grant from the Simons Foundation}

\maketitle

\begin{abstract}
  We establish sharp upper bounds for the $2k$th moment of the Riemann zeta function on the critical line, for all real $0 \leqs k \leqs 2$.
  This improves on earlier work of Ramachandra, Heath-Brown and Bettin-Chandee-Radziwi{\l\l}. 
  \end{abstract}

\section{Introduction}

This paper is concerned with the moments of the Riemann zeta function on the critical line: namely, with the quantity 
$$
I_k(T)=\int_{T}^{2T}| \zeta(\tfrac{1}{2}+it)|^{2k} dt, 
$$
where $k> 0$ is real and $T$ is large.   The problem of understanding the behavior of these moments 
is central in the theory of the Riemann zeta-function.  The classical work of Hardy and Littlewood \cite{HL}, and Ingham \cite{I} established 
asymptotic formulae for $I_k(T)$ in the cases $k=1$ and $2$, and these still remain the only situations where an asymptotic 
is known.   Lacking an asymptotic, much work has been focussed on the problems of obtaining sharp upper and lower bounds for these moments.  
 Lower bounds of the form $I_k(T) \gg_{k} T (\log T)^{k^2}$ are established for all $k \geqs 1$ in Radziwi{\l \l} and Soundararajan \cite{RS lower} unconditionally, 
 and for all $k \geqs 0$ conditionally on the Riemann Hypothesis in papers of Heath-Brown and Ramachandra, see \cite{Ram1, Ram2, HB}.   
 Upper bounds of the form $I_{k}(T) \ll_{k} T (\log T)^{k^2}$ are known when $k = 1/n$ for natural numbers $n$ (due to Heath-Brown \cite{HB}) and 
 when $k=1+1/n$ for natural numbers $n$ (by work of Bettin, Chandee, and Radziwi{\l \l} \cite{BCR}).    
 Conditionally on the Riemann Hypothesis, the work of Harper \cite{Ha}, refining earlier work of Soundararajan \cite{S}, establishes that $I_k(T) \ll_{k} T (\log T)^{k^2}$ 
 for all $k\geqs 0$.   This paper adds to our knowledge on moments by establishing a 
sharp upper bound for $I_k(T)$ for all real $0 \leqs k \leqs 2$.

\begin{theorem} \label{main theorem}
  Let $0 \leqs k \leqs 2$. Then, for $T \geqs 10$, 
  $$
  I_k(T) \ll T (\log T)^{k^2}.
  $$
\end{theorem}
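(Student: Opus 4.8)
The natural strategy is to combine an upper bound for moments of Dirichlet polynomials with a pointwise majorant for $|\zeta(1/2+it)|$ built out of short Dirichlet polynomials. The plan is to fix a parameter and decompose the critical strip scale into ranges. First I would recall that, unconditionally, for $t$ in a set of nearly full measure one has a bound of the shape $|\zeta(1/2+it)| \ll \exp(\Re \sum_{p \leqs x} p^{-1/2-it} + O(\log T/\log x))$, but to get the \emph{sharp} constant $(\log T)^{k^2}$ one cannot afford to lose even a constant power of $\log$. So instead I would follow the ``multiplicative chaos''/recursive approach: split the primes into intervals $I_j=(\exp((\log T)^{\theta_{j-1}}), \exp((\log T)^{\theta_j})]$ for a suitable increasing sequence $\theta_j$, write $P_j(t)=\sum_{p\in I_j} p^{-1/2-it}$, and form the Dirichlet polynomial $N_j(t)=\prod_{i \leqs j}\exp(P_i(t))$ truncated to its first several terms (a ``ballot''-type truncation ensuring $N_j$ is a genuine short Dirichlet polynomial of length $T^{o(1)}$). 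The key analytic input is that $\zeta(1/2+it) N_J(t)^{-1}$ is, on average and away from a small exceptional set, of size $\ll 1$, so that $|\zeta(1/2+it)|^{2k} \ll |N_J(t)|^{2k}$ times a manageable error; this uses a mean-value/Cauchy–Schwarz argument together with the approximate functional equation, exactly as in the conditional arguments of Soundararajan and Harper but arranged to work unconditionally at the cost of only $T^{o(1)}$.

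Next I would estimate $\int_T^{2T} |N_J(t)|^{2k}\,dt$. Because each factor $\exp(P_j(t))$ has been truncated, $|N_J(t)|^{2k}$ can be expanded and, since $2k \leqs 4$ is an \emph{even} integer only at $k=1,2$, for general real $k$ one writes $|N_J(t)|^{2k} = |N_J(t)|^{2\lceil k\rceil} \cdot |N_J(t)|^{-2(\lceil k\rceil - k)}$ and applies Hölder's inequality with exponents chosen so that each resulting factor involves only an \emph{integral} moment of a Dirichlet polynomial, which can be evaluated by the classical mean value theorem for Dirichlet polynomials ($\int_T^{2T}|\sum_{n\leqs N} a_n n^{-it}|^2 dt = (T+O(N))\sum |a_n|^2$) provided $N=T^{o(1)}$. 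The combinatorial heart is then to show that the resulting sum of $|a_n|^2$, where $a_n$ are the coefficients of the truncated product $N_J^{m}$, is $\ll (\log T)^{k^2+o(1)}$ — and to remove the $o(1)$ by summing the contributions over the dyadic-in-$\theta$ blocks, using that $\sum_j (\theta_j - \theta_{j-1})$ telescopes and that the truncation errors at level $j$ are geometrically small. Handling the fractional power via Hölder, while keeping every Dirichlet polynomial of length $T^{o(1)}$ so the mean value theorem applies with negligible error, is where the argument is delicate; this is the main obstacle.

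The remaining pieces are comparatively routine. One must verify that the exceptional set where $|\zeta(1/2+it)| \gg |N_J(t)|$ fails contributes $\ll T(\log T)^{k^2}$: on this set $|\zeta(1/2+it)|$ is large, so one controls it by a higher moment of a longer (but still $T^{o(1)}$) Dirichlet polynomial times the small measure of the set, the measure bound coming from a large-deviation estimate for $\sum_{p\leqs x} p^{-1/2-it}$ (again via the mean value theorem for Dirichlet polynomials and Chebyshev/Markov). One also needs the standard reduction that $\zeta(1/2+it)$ is well-approximated by a Dirichlet polynomial of length $\asymp t$ — via the approximate functional equation — and a further reduction (smoothing, splitting $[T,2T]$) so that only primes up to $T$ and Dirichlet polynomials of length $T^{o(1)}$ ever appear after the chaos decomposition. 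Finally the cases $k=0$ (trivial) and $k=1,2$ (classical asymptotics of Hardy–Littlewood and Ingham) are already known, so one may assume $0 < k < 2$, $k \neq 1$, which is convenient for the Hölder bookkeeping. Assembling these gives $I_k(T) \ll T(\log T)^{k^2}$ for all $0 \leqs k \leqs 2$.
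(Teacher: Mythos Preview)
Your proposal has a genuine gap at the step you yourself flag as ``the main obstacle'': you assert that, unconditionally and away from a small exceptional set, $\zeta(\tfrac12+it)N_J(t)^{-1}$ is on average $\ll 1$, so that $|\zeta(\tfrac12+it)|^{2k}\ll |N_J(t)|^{2k}$. This pointwise (or near-pointwise) domination of $|\zeta|$ by the exponential of a short prime sum is precisely what the Soundararajan--Harper machinery extracts from the Riemann Hypothesis via the explicit formula; there is no known unconditional substitute that loses only $T^{o(1)}$. Your phrase ``arranged to work unconditionally at the cost of only $T^{o(1)}$'' hides the whole difficulty. A second problem is the treatment of $\int |N_J|^{2k}\,dt$ for non-integer $k$: writing $|N_J|^{2k}=|N_J|^{2\lceil k\rceil}\cdot |N_J|^{-2(\lceil k\rceil-k)}$ and ``applying H\"older'' does not give an upper bound, since H\"older goes the wrong way for such a factorisation and negative powers of a Dirichlet polynomial are not themselves short Dirichlet polynomials.

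The paper's proof avoids any comparison of $|\zeta|$ with $|N_J|$. Instead it uses Young's inequality with exponents $p=2/k$, $q=2/(2-k)$ to obtain a \emph{pointwise} bound of the shape
\[
|\zeta(s)|^{2k}\;\leqslant\; \tfrac{k}{2}\,|\zeta(s)|^{4}\,|\mathcal{N}(s;k-2)|^{2}\;+\;\bigl(1-\tfrac{k}{2}\bigr)\,|\mathcal{N}(s;k)|^{2}\;+\;(\text{barrier correction terms}),
\]
where $\mathcal{N}(s;\alpha)$ is a short Dirichlet polynomial mollifying $\zeta(s)^{\alpha}$. Integrating, the second term is handled by the mean-value theorem for Dirichlet polynomials, while the first term is exactly a \emph{twisted fourth moment} $\int_T^{2T}|\zeta(\tfrac12+it)|^4|A(\tfrac12+it)|^2\,dt$ with $A$ a Dirichlet polynomial of length $T^{\theta}$, $\theta<\tfrac14$; this is evaluated unconditionally by the result of Bettin--Bui--Li--Radziwi{\l\l}. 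Thus the key unconditional input is the fourth moment, and the restriction $k\leqslant 2$ is exactly what makes Young's inequality available with the exponent $4$ on the $\zeta$ side. Your outline omits both the Young's-inequality reduction and the twisted fourth moment, which are the substance of the argument.
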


The proof of the theorem is based on the method introduced in Radziwi{\l \l} and Soundararajan \cite{RS1} which  enunciates that 
if in a family of $L$-values, asymptotics for a particular moment can be established with a little room to spare, then sharp upper bounds 
may be obtained for all smaller moments.   Theorem \ref{main theorem} is an illustration of this principle, and combines the ideas of 
\cite{RS1} together with knowledge of the fourth moment of $\zeta(s)$ twisted by short Dirichlet polynomials (see the work of Hughes and Young 
\cite{HY}, and Betin, Bui, Li, and Radziwi{\l \l} \cite{BBLR}).

\section{Plan of the Proof of Theorem \ref{main theorem}} \label{se:proof}

\noindent Throughout, $\log_j$ will denote the $j$-fold iterated logarithm.   Let $T$ be large, and let 
$\ell$ denote the largest integer such that $\log_{\ell} T \geqs 10^4$.   Define a sequence $T_j$ by setting 
$T_1 =e^2$, and for $2\leqs j \leqs \ell$ by 
$$ 
 T_{j} := \exp \Big ( \frac{\log T}{(\log_{j} T)^2} \Big ).  
$$
Note that $e^2 =T_1 < T_2 < \ldots < T_\ell \leqs T^{10^{-8}}$.

For each $2 \leqs j \leqs \ell$, set
$$
\mathcal{P}_{j}(s) := \sum_{T_{j - 1} \leqs p < T_j} \frac{1}{p^s}, \qquad \text{ and } \qquad P_j = {\mathcal P}_j(1) = \sum_{T_{j-1} \leqs p < T_j} \frac 1p.  
$$
Note that for large $T$, 
$$
P_j \sim \log \frac{\log T_{j}}{\log T_{j-1}} =2 \log \Big(\frac{\log_{j-1} T}{\log_j T}\Big) = 2\log_j T - 2\log_{j+1} T,
$$ 
so that $P_\ell \geqs 10^4$, $P_{\ell-1} \geqs \exp(10^4)$, and so on. 
Further, define 
\begin{equation} \label{eq:nk}
\mathcal{N}_{j}(s; \alpha) := \sum_{\substack{p | n \implies T_{j - 1} \leqs p < T_j \\ \Omega(n) \leqs 500 P_j}} \frac{\alpha^{\Omega(n)} g(n)}{n^s} 
\end{equation}
where $g(n)$ denotes the multiplicative function given on prime powers by $g(p^{r}) = 1/r!$.

 The motivation for these definitions is the following.  Typically one might expect that $\zeta(\frac 12+it)^{\alpha}$ is similar to $\prod_{j\leqs \ell} \exp(\alpha {\mathcal P}_j(\frac 12+it))$.  
 Now most of the time, $|{\mathcal P}_j(\tfrac 12+ it)|$ is no more than $50 P_j$, in which case by a Taylor approximation one can approximate 
 $\exp(\alpha {\mathcal P}_j(\frac 12+it))$ by ${\mathcal N}_j(\frac12+it;\alpha)$ (see Lemma 1 below).   Thus, for most $t$ we shall be able to replace $\zeta(\frac 12+it)^{\alpha}$  
by $\prod_{j\leqs \ell} {\mathcal N}_{j} (\frac 12+it;\alpha)$, which is a short Dirichlet  polynomial (of length $\leqs T^{1/10}$, say) and thus facilitates computations.

We now state three propositions from which the main theorem will follow, postponing the proofs of the propositions to later sections.  

\begin{proposition}\label{inequality proposition}
  Let $0 \leqs k \leqs 2$ be a given real number. 
  Then, for all complex numbers $s$ inside the critical strip $0 < \text{Re } s < 1$,
  \begin{align*}
    |\zeta(s)|^{2k} & \leqs k |\zeta(s)|^4 \prod_{2\leqs j \leqs \ell} |\mathcal{N}_j(s; k - 2)|^2 + (2 - k) \prod_{2\leqs j \leqs \ell} |\mathcal{N}_j(s; k)|^2
                      \\ & + \sum_{2\leqs v \leqs \ell} \Big ( k  |\zeta(s)|^4 \prod_{2\leqs j < v} |\mathcal{N}_j(s, k - 2)|^2 + (2 - k) \prod_{2\leqs j < v} |\mathcal{N}_j(s; k)|^2 \Big )  \Big | \frac{\mathcal{P}_{v}(s)}{50 P_v} \Big |^{2 \lceil 50 P_v \rceil}.  
  \end{align*}
\end{proposition}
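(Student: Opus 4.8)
The strategy is to combine an elementary convexity inequality with the Taylor approximation $\mathcal N_j(s;\alpha)\approx\exp(\alpha\mathcal P_j(s))$ furnished by Lemma~1. The elementary input is the weighted arithmetic--geometric mean inequality: for $0\leqs k\leqs 2$ and any reals $w\geqs 0$, $u>0$,
\begin{equation*}
w^{2k}=\bigl(w^{4}u^{\,k-2}\bigr)^{k/2}\bigl(u^{\,k}\bigr)^{(2-k)/2}\leqs \frac{k}{2}\,w^{4}u^{\,k-2}+\frac{2-k}{2}\,u^{\,k},
\end{equation*}
valid because the weights $k/2$ and $(2-k)/2$ are nonnegative and sum to $1$. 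The crucial feature is that the exponent of $u$ on the right is $(k-2)\tfrac{k}{2}+k\tfrac{2-k}{2}=0$, so $u$ is free; I will take $w=|\zeta(s)|$ and $u=\exp\bigl(2\sum_{j}\text{Re}\,\mathcal P_j(s)\bigr)$ over a suitable set of indices $j$, so that $u^{\,k-2}=\prod_j|\exp((k-2)\mathcal P_j(s))|^{2}$ and $u^{\,k}=\prod_j|\exp(k\mathcal P_j(s))|^{2}$. Lemma~1 will then be used to replace $|\exp(\alpha\mathcal P_j(s))|^{2}$ by $|\mathcal N_j(s;\alpha)|^{2}$ throughout the product, at the cost of at most a factor $2$ in the whole product, \emph{provided} $|\mathcal P_j(s)|\leqs 50P_j$ on the set in question; indeed on such a set the per-index relative error is doubly exponentially small, since $P_j\geqs 10^{4}$ and $|\alpha|\leqs 2$ for $\alpha\in\{k-2,k\}$.

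\emph{Case 1: $|\mathcal P_v(s)|\leqs 50P_v$ for every $2\leqs v\leqs\ell$.} Apply the AM--GM inequality with $u=\exp\bigl(2\sum_{2\leqs j\leqs\ell}\text{Re}\,\mathcal P_j(s)\bigr)$ and then Lemma~1 on the whole range $2\leqs j\leqs\ell$; the factor $2$ from Lemma~1 exactly cancels the $\tfrac12$ from AM--GM, and one gets
\begin{equation*}
|\zeta(s)|^{2k}\leqs k|\zeta(s)|^{4}\prod_{2\leqs j\leqs\ell}|\mathcal N_j(s;k-2)|^{2}+(2-k)\prod_{2\leqs j\leqs\ell}|\mathcal N_j(s;k)|^{2},
\end{equation*}
which is the first line of the asserted bound. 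Since every remaining term on the right-hand side of the Proposition is nonnegative, the inequality follows in this case.

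\emph{Case 2: $|\mathcal P_v(s)|>50P_v$ for some $v$.} Let $v_0$ be the least such index, so $|\mathcal P_j(s)|\leqs 50P_j$ for all $2\leqs j<v_0$. Running the same two steps, now with $u=\exp\bigl(2\sum_{2\leqs j<v_0}\text{Re}\,\mathcal P_j(s)\bigr)$ and Lemma~1 on the range $2\leqs j<v_0$, gives
\begin{equation*}
|\zeta(s)|^{2k}\leqs k|\zeta(s)|^{4}\prod_{2\leqs j<v_0}|\mathcal N_j(s;k-2)|^{2}+(2-k)\prod_{2\leqs j<v_0}|\mathcal N_j(s;k)|^{2}.
\end{equation*}
Since $|\mathcal P_{v_0}(s)/(50P_{v_0})|>1$ and $2\lceil 50P_{v_0}\rceil$ is a positive integer, $|\mathcal P_{v_0}(s)/(50P_{v_0})|^{2\lceil 50P_{v_0}\rceil}\geqs 1$, so the right-hand side above is at most the $v=v_0$ summand of the sum in the Proposition; the remaining terms being nonnegative, the claimed inequality holds here too.

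The argument is largely organizational; the one genuine point is that the free parameter $u$ in the AM--GM step makes the bound hold \emph{pointwise}, with no need to know that $\zeta(s)$ is close to a short Euler product, and that cutting the $\mathcal N_j$-product at the first index $v_0$ where $|\mathcal P_{v_0}(s)|$ exceeds $50P_{v_0}$ reproduces exactly the error coefficient $k|\zeta(s)|^{4}\prod_{2\leqs j<v_0}|\mathcal N_j(s;k-2)|^{2}+(2-k)\prod_{2\leqs j<v_0}|\mathcal N_j(s;k)|^{2}$ appearing in the statement. The only estimate needed is that the factor $2$ lost in invoking Lemma~1 is absorbed by the $\tfrac12$ gained from AM--GM, which is immediate because that loss is in fact doubly exponentially small in $P_\ell\geqs 10^{4}$.
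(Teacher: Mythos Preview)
Your proof is correct and follows essentially the same approach as the paper: Young's inequality (which you phrase as weighted AM--GM, the same thing) with the free parameter $u=\exp\bigl(2\sum_j\text{Re}\,\mathcal P_j(s)\bigr)$, followed by Lemma~1 on the indices where $|\mathcal P_j(s)|\leqs 50P_j$, splitting into cases according to the first index $v_0$ where this fails and inserting the factor $|\mathcal P_{v_0}(s)/(50P_{v_0})|^{2\lceil 50P_{v_0}\rceil}\geqs 1$. The paper likewise notes $\prod_j(1-e^{-P_j})^{-1}\leqs 2$, which absorbs the $\tfrac12$ from AM--GM to produce the coefficients $k$ and $2-k$.
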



\begin{proposition} \label{prop1}
  Let $0 \leqs k \leqs 2$ real, be given. Then 
$$ 
\int_T^{2T} \prod_{2\leqs j\leqs \ell} |{\mathcal N}_j(\tfrac 12+it;k)|^2 dt \ll T(\log T)^{k^2}, 
$$ 
and for all $2 \leqs v \leqs \ell$ and $0 \leqs r \leqs \lceil 50 P_v \rceil$, 
$$
 \int_{T}^{2T} \prod_{2\leqs j < v} |\mathcal{N}_j(\tfrac 12 + it; k) |^2 |\mathcal{P}_{v}(\tfrac 12 + it)|^{2 r} d t 
 \ll T  (\log T_{v-1})^{k^2}  \big( r! P_v^{r}\big). 
  $$
\end{proposition}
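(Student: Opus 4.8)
The plan is to prove both bounds via a moment computation for short Dirichlet polynomials: the point of the construction is that each $\mathcal{N}_j(s;k)$ is a Dirichlet polynomial supported on integers built from primes in $[T_{j-1},T_j)$ with $\Omega(n)\leqs 500P_j$, so the product $\prod_{2\leqs j<v}\mathcal{N}_j(\tfrac12+it;k)\cdot\mathcal{P}_v(\tfrac12+it)^r$ is a Dirichlet polynomial of length well below $T$ (indeed $\leqs T^{1/10}$), and hence by the classical mean value theorem for Dirichlet polynomials, $\int_T^{2T}|\sum_{n\leqs N}a_n n^{-1/2-it}|^2\,dt = (T+O(N))\sum_n |a_n|^2/n$, the integral over $[T,2T]$ is essentially $T$ times the sum of squares of the coefficients divided by $n$. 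First I would verify the length bound: the number of primes up to $T_j$ contributing, each raised to powers summing to at most $500P_j$, gives a polynomial of length at most $\prod_j T_j^{500P_j}$, and since $\log T_j = \log T/(\log_j T)^2$ while $P_j \asymp \log_j T$, one has $\sum_j 500 P_j \log T_j \ll \log T \cdot \sum_j (\log_j T)^{-1} \ll \log T/\log_v T$, comfortably below $\frac{1}{10}\log T$ once the $\mathcal{P}_v^r$ factor (length $\leqs T_v^r \leqs T_v^{\lceil 50P_v\rceil}$) is included; this is where one uses that $v\leqs\ell$ and the definition of $\ell$.

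Next I would compute the diagonal sum. Writing $\mathcal{N}_j(s;k) = \sum_n c_j(n;k)n^{-s}$ with $c_j(n;k) = k^{\Omega(n)}g(n)\mathbf{1}[p\mid n\Rightarrow T_{j-1}\leqs p<T_j,\ \Omega(n)\leqs 500P_j]$, the integers appearing in different $\mathcal{N}_j$'s (and in $\mathcal{P}_v^r$) are built from disjoint sets of primes, so the coefficients of the product factor multiplicatively across $j$, and the sum of squares of coefficients divided by $n$ factors as $\prod_{2\leqs j<v}\big(\sum_n c_j(n;k)^2/n\big)\cdot\big(\sum_n (\text{coeff of }\mathcal{P}_v^r)^2/n\big)$. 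For the $\mathcal{N}_j$ factor, if one dropped the constraint $\Omega(n)\leqs 500P_j$ the sum would be exactly $\prod_{T_{j-1}\leqs p<T_j}\sum_{r\geqs0} k^{2r}/(r!^2 p^r)$, which is $\exp\big(k^2 P_j + O(1)\big)$ up to the usual Euler-product constant, and multiplying over $j$ gives $\exp\big(k^2\sum_{j<v}P_j\big)\asymp (\log T_{v-1})^{k^2}$ since $\sum_{j<v}P_j \sim 2\log_{v-1}T = \log\log T_{v-1}$... — more precisely $\sum_{j<v}P_j \sim \log\log T_{v-1}$, giving the claimed $(\log T_{v-1})^{k^2}$. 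The $\Omega(n)\leqs 500P_j$ truncation only removes terms, so it can only help for the upper bound; for the first displayed inequality (with $v$ effectively $\ell+1$, i.e. the full product) the same computation gives $\exp(k^2\sum_{j\leqs\ell}P_j)\asymp(\log T)^{k^2}$. For the $\mathcal{P}_v(s)^r$ factor, $\mathcal{P}_v(s)^r = \sum a(n)n^{-s}$ where $a(n) = \binom{r}{r_{p_1},\dots}$ is a multinomial coefficient supported on squarefree-ish products of $r$ primes from $[T_{v-1},T_v)$ counted with multiplicity; a direct computation (or the standard identity $\sum_n a(n)^2/n \leqs r!\,(\sum_{T_{v-1}\leqs p<T_v}1/p)^r\cdot(1+o(1)) = r!\,P_v^r(1+o(1))$, using that repeated primes contribute lower-order terms since $\sum 1/p^2$ converges) yields exactly the factor $r!\,P_v^r$ claimed.

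Assembling: the mean value theorem contributes $(T + O(T^{1/10}))$ times the product of diagonal sums, i.e. $\ll T\,(\log T_{v-1})^{k^2}\,r!\,P_v^r$, which is the second bound; the first bound is the special case with no $\mathcal{P}_v$ factor and the full product $j\leqs\ell$, giving $\ll T(\log T)^{k^2}$. The main obstacle is bookkeeping rather than conceptual: one must track the Euler-product constants carefully enough to see they stay bounded (so they can be absorbed into $\ll$) and, more delicately, confirm that imposing $\Omega(n)\leqs 500P_j$ does not destroy the clean factorization — it does not, because truncating each local factor independently still gives a product over $j$, and truncation only decreases a sum of non-negative terms, so every estimate above survives as an upper bound. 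A secondary point requiring care is the binomial/multinomial estimate $\sum_n a(n)^2/n \ll r!\,P_v^r$ uniformly in $r$ up to $\lceil 50P_v\rceil$: here one uses that the "collision" terms where $\mathcal{P}_v^r$ has a prime repeated are down by factors of $\sum_{p\geqs T_{v-1}}1/p^2 = o(1)$ per collision, so they contribute a convergent correction to the leading $r!\,P_v^r$, uniformly in the relevant range of $r$.
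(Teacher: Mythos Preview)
Your proposal is correct and follows essentially the same route as the paper: apply the mean value theorem for Dirichlet polynomials (using that the product has length $\leqs T^{1/10}$), factor the diagonal sum over the disjoint prime ranges, bound each $\mathcal{N}_j$-block by its Euler product $\ll (\log T_j/\log T_{j-1})^{k^2}$ after dropping the $\Omega$-constraint, and bound the $\mathcal{P}_v^r$-block by $r!\,P_v^r$. The one place where the paper is cleaner is the $\mathcal{P}_v^r$ estimate: rather than separating squarefree from ``collision'' terms and arguing uniformity in $r$, the paper simply observes that the coefficient of $n^{-s}$ in $\mathcal{P}_v^r$ is $r!\,g(n)$ and uses $g(n)\leqs 1$ to get $\sum_n (r!\,g(n))^2/n \leqs r!\sum_n r!\,g(n)/n = r!\,P_v^r$ exactly, with no $(1+o(1))$ to track.
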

\begin{proposition} \label{prop2}
  Let $0 \leqs k \leqs 2$ real, be given. Then 
  $$ 
  \int_T^{2T} |\zeta(\tfrac 12+ it)|^4 \prod_{2\leqs j\leqs \ell} |{\mathcal N}_j(\tfrac 12+it; k-2)|^2 dt \ll T (\log T)^{k^2}, 
  $$ 
  and for all $2 \leqs v \leqs \ell$ and $0 \leqs r \leqs \lceil 50 P_v \rceil$,
  \begin{align*}
  \int_{T}^{2T}  |\zeta(\tfrac 12 + it)|^4 & \prod_{2\leqs j < v} |\mathcal{N}_j ( \tfrac 12 + it; k - 2) |^2
   |\mathcal{P}_v(\tfrac 12 + it)|^{2r} dt \\
   & \ll T (\log T)^{4} (\log T_{v-1})^{k^2-4} \Big( 18^r r! P_v^r \exp(P_v)\Big). 
  \end{align*}
\end{proposition}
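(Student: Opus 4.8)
The plan is to deduce both assertions from the asymptotic formula for the fourth moment of $\zeta(\tfrac12+it)$ twisted by a short Dirichlet polynomial, as established by Hughes and Young \cite{HY} and extended to longer twists by Bettin, Bui, Li and Radziwi{\l\l} \cite{BBLR}. Write
$$
A(s)\assign\prod_{2\leqs j\leqs\ell}\mathcal{N}_j(s;k-2),\qquad
B(s)\assign\mathcal{P}_v(s)^r\prod_{2\leqs j<v}\mathcal{N}_j(s;k-2),
$$
so that the first assertion bounds $\int_T^{2T}|\zeta(\tfrac12+it)|^4|A(\tfrac12+it)|^2\,dt$ and the second bounds $\int_T^{2T}|\zeta(\tfrac12+it)|^4|B(\tfrac12+it)|^2\,dt$. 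First I would record that $A$ and $B$ are genuine Dirichlet polynomials of length at most $T^{1/10}$: the constraint $\Omega(n)\leqs 500P_j$ together with $\log T_j=\log T/(\log_j T)^2$ and $P_j\asymp\log_j T$ shows that $\mathcal{N}_j$ has length $\exp(O(\log T/\log_j T))$, and $\sum_{j\leqs\ell}1/\log_j T$ is dominated by its final term $1/\log_\ell T\leqs 10^{-4}$, while the extra factor $\mathcal{P}_v(s)^r$ contributes length $\leqs T_v^{\lceil 50P_v\rceil}=T^{o(1)}$. This puts both $A$ and $B$ comfortably inside the range for which the twisted fourth moment is known.

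Next I would expand $|A(\tfrac12+it)|^2=\sum_{m,n}a(m)\overline{a(n)}(mn)^{-1/2}(n/m)^{it}$ (and similarly for $B$), reduce each frequency to coprime form, writing $n/m=h/k$ with $(h,k)=1$, $h=n/(m,n)$, $k=m/(m,n)$, and insert $(h/k)^{it}$ into the Hughes--Young--BBLR formula. Estimating its error term (a negligible power of $T$, by the length restriction) and its main term (the complete degree-four polynomial in $\log T$ coming from the $\binom{4}{2}$ swap terms) yields an inequality of the shape
$$
\int_T^{2T}|\zeta(\tfrac12+it)|^4|A(\tfrac12+it)|^2\,dt\ \ll\ T(\log T)^4\sum_{m,n}\frac{a(m)\overline{a(n)}\,(m,n)}{mn}\,\sigma\bigl(m/(m,n),\,n/(m,n)\bigr),
$$
where $\sigma$ is the multiplicative arithmetic factor produced by the twisted fourth moment, which after rescaling satisfies $\sigma(1,1)=1$ and $\sigma(p,1)=\sigma(1,p)=2+O(1/p)$ (the leading $2$ reflecting $\zeta(s)^2=\sum_n d(n)n^{-s}$), and likewise for $B$.

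The crux is then to evaluate this arithmetic bilinear form. Since the coefficients of $A$ (resp.\ $B$) are supported on integers whose prime factors lie in the pairwise disjoint blocks $[T_{j-1},T_j)$, the sum factors as an Euler product over those blocks. For a prime $p$ in a block $j$ carrying the twist $\mathcal{N}_j(\,\cdot\,;k-2)$ the local factor equals $\sum_{a,b\geqs0}\frac{(k-2)^{a+b}}{a!\,b!}\,p^{\min(a,b)-a-b}\,\sigma(p^{|a-b|},1)$; expanding this, and crucially keeping the cancellation produced by the sign of $(k-2)^{a+b}$ rather than passing to absolute values, gives local factor $=1+(k^2-4)/p+O_k(1/p^2)$. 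Hence, by Mertens, $\prod_{T_1\leqs p<T_{v-1}}\bigl(1+(k^2-4)/p+O_k(1/p^2)\bigr)\asymp_k(\log T_{v-1})^{k^2-4}$. For the first assertion the product runs over all $p<T_\ell$, and since $\log_\ell T$ is bounded we have $\log T_\ell\asymp\log T$, so the bound becomes $T(\log T)^4(\log T_\ell)^{k^2-4}\asymp T(\log T)^{k^2}$. For the second assertion the block $[T_{v-1},T_v)$ is handled separately: writing $\mathcal{P}_v(s)^r=\sum_n c_r(n)n^{-s}$ and isolating the diagonal $m=n$, squarefree with $\omega(n)=r$, produces the main contribution $(r!)^2\sum_{\omega(n)=r,\ p\mid n\Rightarrow p\in[T_{v-1},T_v)}n^{-1}=(1+o(1))\,r!\,P_v^r$, while the off-diagonal terms, and the interaction of $\mathcal{P}_v(s)^r$ with the $|\zeta|^4$-weight on this block (which contributes $\prod_{T_{v-1}\leqs p<T_v}(1+O(1/p))=\exp(O(P_v))$), are absorbed crudely into the constants $18^r$ and $\exp(P_v)$; together with the factor $(\log T_{v-1})^{k^2-4}$ from the earlier blocks this gives the claimed bound. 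A final routine step removes the truncations $\Omega(n)\leqs 500P_j$, which alter the bilinear forms by a factor $1+O(e^{-cP_j})$ via Rankin's trick (using $|k-2|\leqs 2$).

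I expect the main obstacle to be twofold. First, one must keep the length of $A$ and $B$ within the explicit, not over-generous, range for which the twisted fourth moment of Hughes--Young--BBLR is available; this is precisely what dictates the sizes of the $T_j$ and of the truncation level $500P_j$. Second, one must carry the full degree-four polynomial from that formula through the Euler-product computation \emph{with the signs intact}, so as to land on the exponent $k^2$ rather than some larger exponent --- this is why one cannot afford to pass to absolute values, and it is also why the deliberately crude constants $18^r$ and $\exp(P_v)$ appear in the statement: they buy exactly enough slack to make the remaining estimates routine.
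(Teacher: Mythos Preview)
Your proposal is correct and follows essentially the same strategy as the paper: apply the twisted fourth-moment formula of Hughes--Young/BBLR (packaged in the paper as a contour integral, Proposition~\ref{prop3}), reduce to the bilinear form $\sum_{m,n} a(m)\overline{a(n)}/[m,n]$ weighted by the multiplicative factors $B_{z_1,z_2,z_3,z_4}$, factor over prime blocks, remove the truncations $\Omega(n)\leqs 500P_j$ by Rankin's trick, and compute the local factors at each $p$ keeping the sign of $(k-2)$ to obtain $1+(k^2-4)/p+\text{error}$ and hence $(\log T_{v-1})^{k^2-4}$, while bounding the $\mathcal{P}_v^r$ block crudely to produce $18^r r! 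P_v^r \exp(P_v)$. The only discrepancy worth noting is that your stated error $\sigma(p,1)=2+O(1/p)$ omits the shift contribution: in the paper's contour formulation one has $B_{z_1,z_2,z_3,z_4}(p)=\sigma_{z_1,z_2}(p)(1+O(1/p))=2+O(\log p/\log T)+O(1/p)$, but since $\sum_{p<T_{v-1}}\log p/(p\log T)=O(1)$ this does not affect the outcome.
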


We quickly deduce Theorem \ref{main theorem} from the above propositions.

\begin{proof}[Proof of Theorem \ref{main theorem}]  
  Combining the above propositions we find
  \begin{align*} \label{eq:firstterm}
  \int_{T}^{2T}  |\zeta(\tfrac 12 + it)|^{2k} dt &\ll T(\log T)^{k^2} 
  + \sum_{2 \leqs v \leqs \ell} T(\log T_{v - 1})^{k^2}  \nonumber\\ 
  &\times\Big( \frac{\lceil 50P_v\rceil! P_v^{\lceil 50P_v\rceil}}{(50P_v)^{2\lceil 50P_v\rceil}} 
  + \Big(\frac{\log T}{\log T_{v-1}}\Big)^4 \frac{18^{\lceil 50P_v\rceil }\lceil 50P_v\rceil! P_v^{\lceil 50P_v\rceil} \exp(P_v)}{(50P_v)^{2\lceil 50P_v\rceil} } \Big).  
    \end{align*} 
   A quick calculation shows that the above is 
   \begin{align*} 
&   \ll T(\log T)^{k^2} \Big( 1+  \sum_{2\leqs v \leqs \ell}  \Big( \frac{\log T}{\log T_{v-1}}\Big)^4 \exp(-50 P_v)\Big)\\
& \ll 
   T(\log T)^{k^2} \Big( 1+ \sum_{2\leqs v\leqs \ell} (\log_{v-1} T)^8 \Big(\frac{\log_v T}{\log_{v-1} T}\Big)^{100}\Big) \ll T(\log T)^{k^2} . 
  \end{align*}  
  \end{proof}

\section{Proof of Proposition \ref{inequality proposition}}

\begin{lemma}\label{short poly lemma}
 Let $|\alpha| \leqs 2$ be a real number, $T$ be sufficiently large, and $s$ be a complex number. For all $2\leqs j\leqs \ell$, if $| \mathcal{P}_j(s) | \leqs 50 P_j$ then
 $$
 \exp (2 \alpha  \text{Re } \mathcal{P}_j (s)) \leqs \left( 1 - e^{-P_j} \right)^{-1}
 | \mathcal{N}_j (s; \alpha) |^2.
 $$
\end{lemma}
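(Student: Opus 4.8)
The goal of Lemma \ref{short poly lemma} is to show that, under the smallness hypothesis $|\mathcal{P}_j(s)| \leqs 50 P_j$, the truncated Dirichlet series $\mathcal{N}_j(s;\alpha)$ faithfully captures $\exp(\alpha \mathcal{P}_j(s))$, up to the harmless factor $(1-e^{-P_j})^{-1}$. The plan is to compare $\mathcal{N}_j(s;\alpha)$ with the full (untruncated) Euler-type product, and then control the tail of the truncation.

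First I would observe that $\exp(\alpha \mathcal{P}_j(s)) = \prod_{T_{j-1}\leqs p < T_j} \exp(\alpha/p^s)$, and expanding each local factor via $\exp(\alpha/p^s) = \sum_{r\geqs 0} \alpha^r g(p^r)/p^{rs}$ (recall $g(p^r)=1/r!$), one gets that the full product equals $\sum_n \alpha^{\Omega(n)} g(n) n^{-s}$ where $n$ ranges over integers supported on primes in $[T_{j-1},T_j)$, with no restriction on $\Omega(n)$. Thus $\mathcal{N}_j(s;\alpha)$ is precisely this sum with the extra constraint $\Omega(n)\leqs 500 P_j$, and I need to show the omitted terms are negligible. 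Writing $E = \sum_{\Omega(n) > 500P_j} \alpha^{\Omega(n)} g(n) n^{-s}$ for the error, the key estimate is a bound of the shape $|E| \leqs e^{-P_j}\exp(\alpha \text{Re}\,\mathcal{P}_j(s))$ or something comparable, which combined with $|\mathcal{N}_j| \geqs \exp(\alpha\text{Re}\,\mathcal{P}_j(s)) - |E|$ would give $|\mathcal{N}_j(s;\alpha)| \geqs (1-e^{-P_j})\exp(\alpha\text{Re}\,\mathcal{P}_j(s))$, and hence $|\mathcal{N}_j(s;\alpha)|^2 \geqs (1-e^{-P_j})^2 \exp(2\alpha\text{Re}\,\mathcal{P}_j(s))$; adjusting constants (using $(1-x)^2 \geqs$ a suitable multiple, or rather redoing the tail bound slightly more carefully) yields the stated inequality with the single power $(1-e^{-P_j})^{-1}$.

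To bound the tail $E$, I would use a Rankin-type (i.e. multiplicative) trick: for $\Omega(n) > 500 P_j$ one has, for any $\lambda > 1$, $\lambda^{\Omega(n)} > \lambda^{500 P_j}$, so
\[
|E| \leqs \lambda^{-500P_j}\sum_{p|n\implies T_{j-1}\leqs p<T_j} \lambda^{\Omega(n)} |\alpha|^{\Omega(n)} g(n) n^{-1/2} = \lambda^{-500P_j}\prod_{T_{j-1}\leqs p<T_j}\exp\Big(\frac{\lambda|\alpha|}{\sqrt p}\Big).
\]
Using $|\alpha|\leqs 2$ and $\sum_{T_{j-1}\leqs p<T_j} p^{-1/2} \ll 2P_j\cdot$(something small, since these primes are all at least $T_{j-1}\to\infty$), the product over primes is $\exp(o(P_j))$, in fact $1+o(1)$; meanwhile $\lambda^{-500P_j}$ with a fixed $\lambda>1$ gives decay like $\exp(-cP_j)$ for a large constant $c$. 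On the other side, $\exp(\alpha\text{Re}\,\mathcal{P}_j(s)) \geqs \exp(-|\alpha|\cdot 50P_j) \geqs \exp(-100P_j)$ by the hypothesis. Choosing $\lambda$ large enough that $500\log\lambda - 100$ exceeds $1$ comfortably, we get $|E| \leqs e^{-P_j}\exp(\alpha\text{Re}\,\mathcal{P}_j(s))$ for $T$ (hence $T_{j-1}$, hence $P_j$) large; the ranges of $j$ here all have $P_j \geqs 10^4$ so "large" is automatic.

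The main obstacle is getting the book-keeping in the final inequality exactly right: I want the clean factor $(1-e^{-P_j})^{-1}$ on a \emph{squared} quantity, so I should be a little generous in the tail bound — e.g. prove $|E|\leqs \tfrac12 e^{-P_j}\exp(\alpha\text{Re}\,\mathcal{P}_j(s))$ or directly estimate $|\mathcal{N}_j|^2 = |\exp(\alpha\mathcal{P}_j) - E|^2 \geqs \exp(2\alpha\text{Re}\,\mathcal{P}_j)(1 - |E|\exp(-\alpha\text{Re}\,\mathcal{P}_j))^2$ and then absorb the cross terms — and then check that $(1-\delta)^2 \geqs 1 - e^{-P_j}$... that actually goes the wrong way, so instead I would bound $|\mathcal{N}_j(s;\alpha)|^2 \geqs \exp(2\alpha\text{Re}\,\mathcal{P}_j(s))(1-e^{-P_j/2})^2$ say, and note $(1-e^{-P_j/2})^2 \geqs 1-e^{-P_j}$ is false in general but $(1-e^{-P_j/2})^{-2}\leqs (1-e^{-P_j})^{-1}$ — rather, it is cleanest to simply arrange the tail estimate so that $|\mathcal{N}_j(s;\alpha)| \geqs (1-e^{-P_j})\exp(\alpha\text{Re}\,\mathcal{P}_j(s)) \cdot (1-e^{-P_j})^{-1/2}$-type slack is available, i.e. prove $|E|\exp(-\alpha\text{Re}\,\mathcal{P}_j(s)) \leqs 1-(1-e^{-P_j})^{1/2}$, which for $P_j\geqs 10^4$ is implied by $|E|\exp(-\alpha\text{Re}\,\mathcal{P}_j(s))\leqs e^{-P_j}$. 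That last step is routine once $\lambda$ is chosen, so the only real care needed is in fixing the constant $\lambda$ (equivalently justifying why the exponent $500$ appears in the definition of $\mathcal{N}_j$) and in confirming $\sum_{T_{j-1}\leqs p<T_j}p^{-1/2}$ is small — which holds since $T_{j-1}\geqs e^2$ and grows, making $\sum p^{-1/2}\ll T_{j-1}^{-1/2+\epsilon}$, negligible against $P_j$.
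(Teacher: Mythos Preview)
Your identification of $\mathcal{N}_j(s;\alpha)$ with the truncated Taylor series $\sum_{m\leqs 500P_j}\alpha^m\mathcal{P}_j(s)^m/m!$ is correct and matches the paper. The gap is in your tail bound. When you apply Rankin's trick to the Dirichlet series and pass to absolute values termwise, you arrive at $\prod_p\exp(\lambda|\alpha|/\sqrt{p})=\exp\big(\lambda|\alpha|\sum_{T_{j-1}\leqs p<T_j}p^{-1/2}\big)$, and your claim that $\sum_{T_{j-1}\leqs p<T_j}p^{-1/2}$ is $o(P_j)$ (or $\ll T_{j-1}^{-1/2+\varepsilon}$) is false: by partial summation this sum is $\sim 2T_j^{1/2}/\log T_j$, which is enormous compared with $P_j\asymp\log_j T$. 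Each term is indeed $\leqs T_{j-1}^{-1/2}$, but there are $\pi(T_j)-\pi(T_{j-1})$ of them. (There is also the minor issue that the lemma allows arbitrary complex $s$, so $|n^{-s}|\leqs n^{-1/2}$ is not available in general; but even on the critical line the estimate collapses.)

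The underlying problem is that bounding the Dirichlet series termwise discards all the cancellation encoded in the hypothesis $|\mathcal{P}_j(s)|\leqs 50P_j$; your tail estimate never actually uses this hypothesis, and without it the tail need not be small. The paper's remedy is to keep the Taylor form of the tail,
\[
E=\sum_{m>500P_j}\frac{\alpha^m\mathcal{P}_j(s)^m}{m!},
\]
and only then take absolute values, so that $|\alpha\mathcal{P}_j(s)|\leqs 100P_j$ enters directly and the standard Taylor remainder bound gives $|E|\leqs 2(100P_j)^{500P_j}/\lceil 500P_j\rceil!\leqs e^{-250P_j}$. Combined with $|\exp(\alpha\mathcal{P}_j(s))|\geqs e^{-100P_j}$, this yields the claimed inequality. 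Your final bookkeeping about squeezing out the factor $(1-e^{-P_j})^{-1}$ is fine once the tail is controlled this way.
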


\begin{proof}  Expanding $\exp(\alpha {\mathcal P}_j(s))$ using a Taylor series, and using the assumption $|\mathcal{P}_j(s)| \leqs 50 P_j$, 
we find that 
$$ 
|\exp(\alpha {\mathcal P}_j(s))| \leqs \Big| \sum_{m\leqs 500 P_j} \frac{\alpha^m {\mathcal P}_j(s)^m}{m!} \Big| + 2 \cdot \frac{(100 P_j)^{500P_j}}{\lceil 500P_j\rceil !}  .
$$ 
The last term is $\leqs e^{-250 P_j}$, while $|\exp(\alpha {\mathcal P}_j(s))| \geqs \exp(-|\alpha| 50P_j) \geqs \exp(-100P_j)$.   Therefore, since $P_j \geqs 10^4$, we may easily 
conclude that 
  $$
  |\exp(\alpha \mathcal{P}_j(s))|^2 \leqs (1 - e^{-P_j})^{-1}  \Big | \sum_{m \leqs 500 P_j} \frac{\alpha^m \mathcal{P}_j(s)^m}{m!} \Big |^2.
  $$
  Since 
    $$
\frac{  \mathcal{P}_j(s)^{m}}{m!}  = \frac{1}{m!} \sum_{T_{j - 1} \leqs p_1, \ldots, p_m < T_j} \frac{1}{(p_1 \ldots p_m)^s} = 
\sum_{\substack{p | n \implies T_{j - 1} \leqs p < T_j \\ \Omega(n) = m}} \frac{g(n)}{n^s}, 
  $$
  the proposition follows.  
  %
\end{proof}

\begin{proof}[Proof of Proposition \ref{inequality proposition}]
This proposition is an analogue of Lemma 2 of \cite{RS1}, and is proved similarly.  We make use of Young's inequality $ab \leqs a^p/p + b^q/q$ for any non-negative 
real numbers $a$ and $b$, and non-negative $p$ and $q$ with $1/p+1/q=1$.    

If $| \mathcal{P}_j (s) | \leqs 50 P_j$ for all $2\leqs j \leqs \ell$ then using Young's inequality with $p = 4 / 2 k$ and $q = 4 / (4 - 2 k)$ we have
$$
|\zeta(s)|^{2k} \leqs \frac{k}{2}  | \zeta (s) |^{4}  \prod_{2\leqs j \leqs \ell} e^{(- 4 + 2k)
    \text{Re }\mathcal{P}_j (s)} + \Big(1 - \frac{k}{2}\Big)  \prod_{2\leqs j \leqs \ell} e^{2k\text{Re }\mathcal{P}_j
    (s) }.
$$
By Lemma \ref{short poly lemma} the right hand side is 
$$
\leqs  \prod_{2\leqs j \leqs \ell} (1 - e^{-P_j})^{-1} \Big( \frac{k}{2}  | \zeta (s) |^{4}  \prod_{2\leqs j \leqs \ell} |\mathcal{N}_j(s; k - 2)|^2 
   + \Big(1 - \frac{k}{2} \Big)  \prod_{2\leqs j \leqs \ell} |\mathcal{N}_j(s; k)|^2 \Big).  
$$
Since   $\prod_{2\leqs j\leqs \ell} (1 - e^{-P_j})^{-1} \leqs 2$, this contribution is bounded by  the first two terms in the proposition.

Now suppose that there exists an integer $2\leqs v\leqs \ell$ for which $| \mathcal{P}_j (s) | \leqs 50 P_j$ whenever $2\leqs j < v$, but with $| \mathcal{P}_{v} (s) | > 50 P_{v}$. Then applying Young's inequality and Lemma \ref{short poly lemma} as before, and noting that $| \mathcal{P}_v (s) | / (50 P_v) \geqs 1$, we find
  \begin{align*} 
  | \zeta (s) |^{2 k} & \leqs \Big ( k  | \zeta (s) |^4  \prod_{2\leqs j < v} | \mathcal{N}_j(s; k - 2) |^2  + (2-k)  \prod_{2\leqs j < v} | \mathcal{N}_j(s;k) |^2 \Big ) \Big | \frac{\mathcal{P}_v(s)}{50 P_v} \Big |^{2 \lceil 50 P_v \rceil} .
  \end{align*}
  Summing this over all $2\leqs v\leqs \ell$, we obtain Proposition \ref{inequality proposition}.
\end{proof}

\section{Proof of Proposition \ref{prop1}}

We give a proof of the second assertion of the proposition, the first statement being similar.  
Since $\prod_{2\leqs j < v} \mathcal{N}_j(s; k) \mathcal{P}_v(s)^{r}$ is a Dirichlet polynomial of length $\leqs T^{1/10}$, 
using the familiar mean value estimate for Dirichlet polynomials, we find that
\begin{align*}
\int_{T}^{2T} & \ \prod_{2\leqs j < v} |\mathcal{N}_j(s; k) |^2  |\mathcal{P}_v(s)|^{2r} d t\\
& \ll T \prod_{2\leqs j < v} \Big ( \sum_{\substack{p | n_j \implies T_{j - 1} \leqs p < T_j \\ \Omega(n_j) \leqs 500 P_j}} \frac{k^{2 \Omega(n_j)}}{n_j} \Big )
 \sum_{\substack{p|n \implies T_{v-1} \leqs p < T_v \\ \Omega(n)=r}} \frac{(r! g(n))^2}{n}.
\end{align*}
Now note that,  
$$
\sum_{\substack{p | n_j \implies T_{j - 1} \leqs p < T_{j}}} \frac{k^{2 \Omega(n_j)}}{n_j} \leqs \prod_{T_{j-1} \leqs p < T_j} 
\Big( 1+ \frac{k^2}{p} + \frac{k^4}{p^2} + \ldots \Big) \ll \Big( \frac{\log T_{j}}{\log T_{j-1}} \Big)^{k^2},
$$
where we used that $p \geqs T_1 \geqs e^2 > k^2$ so that the convergence of $\sum_{r=0}^{\infty} k^{2r}/p^r$ is assured.
Further, since $g(n) \leqs 1$ always, 
$$ 
\sum_{\substack{p|n \implies T_{v-1} \leqs p< T_v \\ \Omega(n)=r}} \frac{(r! g(n))^2}{n} \leqs r! 
\sum_{\substack{p|n \implies T_{v-1} \leqs p< T_v \\ \Omega(n)=r}} \frac{r! g(n)}{n} = r! P_v^r.
$$ 
The second assertion of the proposition follows.  

\section{Twisted  fourth moments}

In order to establish Proposition \ref{prop2} we shall require a formula for the twisted fourth moment,
$$
\int_{T}^{2T} |\zeta(\tfrac 12 + it)|^4 \cdot \Big | \sum_{n \leqs T^{\theta}}\frac{a(n)}{n^{1/2 + it}} \Big |^2 \Phi \Big ( \frac{t}{T} \Big ) dt, 
$$
where $\Phi$ is a smooth non-negative function such that $\Phi(x) \geqs 1$ for $1 \leqs x \leqs 2$.  Such mean values have been considered by many 
authors (for example  see \cite{HY}), and we shall make use of the asymptotic established in \cite{BBLR}.

To state the asymptotic formula, we introduce some notation.  Put 
$$
A_{z_1,z_2,z_3,z_4}=\frac{\zeta(1+z_1+z_3)\zeta(1+z_1+z_4)\zeta(1+z_2+z_3)\zeta(1+z_2+z_4)}
{\zeta(2+z_1+z_2+z_3+z_4)}, 
$$
and 
\begin{equation} 
\label{5.1} 
B_{z_1,z_2,z_3,z_4}(n)=\prod_{p^{n_p} \Vert n} 
\Big( \sum_{j\geqs 0} \frac{ \sigma_{z_1,z_2}(p^{n_p+j})\sigma_{z_3,z_4}(p^{j})}{p^{j}}  \Big) \Big( 
\sum_{j\geqs 0}\frac {\sigma_{z_1,z_2}(p^{j})\sigma_{z_3,z_4}(p^{j})}{p^{j}}  \Big)^{-1} 
\end{equation} 
where
$\sigma_{z_1,z_2}(n)=\sum_{n_1n_2=n}n_1^{-z_1}n_2^{-z_2}$
and $n_p$ is the highest power of $p$ dividing $n$.  Finally, define 
\begin{equation} 
\label{5.2} 
F(z_1,z_2,z_3,z_4) = A_{z_1,z_2,z_3,z_4} \sum_{m, n} \frac{a(n)\overline{a(m)}}{[m,n]} B_{z_1,z_2,z_3,z_4}\Big( \frac{n}{(m,n)}\Big) B_{z_3,z_4,z_1,z_2}\Big( \frac{m}{(m,n)}\Big). 
\end{equation} 
Note that $F$ depends on the coefficients of the Dirichlet polynomial twisting the fourth moment.  

\begin{proposition} \label{prop3}  Let $T\geqs 2$ and let $\Phi(x)$ be a smooth function supported on $[1/2,4]$ satisfying $\Phi^{(j)}(x)\ll_{\varepsilon} T^\varepsilon$ 
for any $j\geqs 0$ and all $\varepsilon > 0$. Let $a(n)$ be a sequence of complex numbers obeying the bound $|a(n)| \ll_{\varepsilon} n^{\varepsilon}$ for 
all $n \geqs 1$ and all $\varepsilon > 0$. Then, for $\theta < \tfrac 14$, we have 
  \begin{align*}
    \int_{\mathbb{R}} & |\zeta(\tfrac 12 + it)|^4 \cdot \Big | \sum_{n \leqs T^{\theta}} \frac{a(n)}{n^{1/2 + it}} \Big |^2 \Phi \Big ( \frac{t}{T} \Big ) dt = O(T^{1-\epsilon}) + \\ & \frac{1}{4(2\pi i)^4} \int_{\substack{ |z_j| = 3^j / \log T \\ 1\leqs j\leqs 4}} F(z_1, z_2, z_3, z_4) \Delta(z_1, z_2, -z_3, -z_4)^2 \Big ( \int_{\mathbb{R}} \Phi \Big ( \frac{t}{T} \Big ) \prod_{j = 1}^{4} \Big ( \frac{t}{2\pi} \Big )^{z_j/2} dt \Big ) \prod_{j = 1}^{4} \frac{d z_j}{z_j^4}
  \end{align*}
  where 
  $$ 
  \Delta(z_1,z_2, z_3,z_4) = \prod_{1\leqs i < j \leqs 4} (z_j-z_i) 
  $$ 
    denotes the Vandermonde determinant.
\end{proposition}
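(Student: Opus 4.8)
The plan is to obtain Proposition~\ref{prop3} as a repackaging of the asymptotic evaluation of the \emph{shifted} twisted fourth moment of $\zeta$, carried out by Bettin, Bui, Li and Radziwi\l\l{} \cite{BBLR} in the range $\theta<\tfrac14$ (see also Hughes--Young \cite{HY}); the only work left is to recast their main term in the compact contour-integral shape of the statement. Concretely, for complex shifts $\alpha_1,\alpha_2,\beta_1,\beta_2$ with $|\alpha_i|,|\beta_i|\asymp 1/\log T$, and for any weight in the class described in the proposition, the work of \cite{BBLR} gives
$$
\int_{\mathbb{R}} \zeta(\tfrac12+\alpha_1+it)\zeta(\tfrac12+\alpha_2+it)\zeta(\tfrac12+\beta_1-it)\zeta(\tfrac12+\beta_2-it)\Big|\sum_{n\leqs T^\theta}\frac{a(n)}{n^{1/2+it}}\Big|^2\Phi\Big(\frac tT\Big)\,dt = \mathcal{M}(\alpha,\beta)+O(T^{1-\epsilon}),
$$
where $\mathcal{M}(\alpha,\beta)$ is the Conrey--Farmer--Keating--Rubinstein--Snaith main term: the $t$-integral of $\Phi(t/T)$ against a sum of $\binom{4}{2}=6$ ``swap terms'', one for each way of exchanging an equal-size subset of $\{\alpha_1,\alpha_2\}$ with a subset of $\{-\beta_1,-\beta_2\}$. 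Each swap term is a power of $t/2\pi$ times the product of four values $\zeta(1+\cdot)$ divided by one value $\zeta(2+\cdot)$ — this being $A_{z_1,z_2,z_3,z_4}$ at the swapped shifts — times the arithmetic sum $\sum_{m,n}a(n)\overline{a(m)}[m,n]^{-1}B_{z_1,z_2,z_3,z_4}(n/(m,n))B_{z_3,z_4,z_1,z_2}(m/(m,n))$ at the same shifts; for the no-swap term this arithmetic factor is precisely $F(\alpha_1,\alpha_2,\beta_1,\beta_2)$ as in \eqref{5.2}. The identification of the diagonal of the relevant shifted divisor correlation with the multiplicative expressions \eqref{5.1}--\eqref{5.2} is part of what \cite{BBLR} (and \cite{HY}) establish, and may be quoted directly.

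I would then convert the finite sum $\mathcal{M}(\alpha,\beta)$ into a contour integral by the standard combinatorial device of \cite{CFKRS}: a sum of swap terms of exactly the above shape equals
$$
\frac{1}{4(2\pi i)^4}\oint\cdots\oint F(z_1,z_2,z_3,z_4)\,\Delta(z_1,z_2,-z_3,-z_4)^2\Big(\frac{t}{2\pi}\Big)^{(z_1+z_2+z_3+z_4)/2}\prod_{j=1}^4\frac{dz_j}{(z_j-\alpha_1)(z_j-\alpha_2)(z_j+\beta_1)(z_j+\beta_2)},
$$
where each $z_j$ runs over a small positively oriented circle enclosing all of $\alpha_1,\alpha_2,-\beta_1,-\beta_2$: the factor $\Delta(z_1,z_2,-z_3,-z_4)^2$ vanishes to order two on each of the hyperplanes $z_1=-z_3$, $z_1=-z_4$, $z_2=-z_3$, $z_2=-z_4$ and so cancels the simple polar divisors that the zeta factors of $A$ contribute to $F$, leaving an integrand holomorphic apart from the four prescribed poles in each variable; evaluating by residues then reproduces the six swap terms with the correct signs. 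Now one simply specializes $\alpha_1=\alpha_2=\beta_1=\beta_2=0$: the contour integral depends holomorphically on the shifts, and at the origin its value is obtained by letting each $z_j$ run over a circle enclosing only $0$, at which point each denominator becomes $z_j^4$. Taking the nested radii $|z_j|=3^j/\log T$ — pairwise disjoint, each surrounding $0$, and all inside the region where the formula is valid — and moving the absolutely convergent $t$-integration inside produces exactly the expression of Proposition~\ref{prop3}; the discarded error remains $O(T^{1-\epsilon})$ since each of the at most six residue contributions is trivially of size $(\log T)^{O(1)}$.

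The substance of the argument lies in this last matching, and that is where care is needed: one must check (i) that the arithmetic factor attached to each of the six swap terms really is $F$ evaluated at the correspondingly permuted and sign-reversed arguments — which follows from the symmetry of $F$ in $(z_1,z_2)$ and in $(z_3,z_4)$ together with the evaluation in \cite{BBLR} — and (ii) that all of the normalising data (the constant $\tfrac14=1/(2!)^2$, the power $(2\pi i)^4$, the exponent $4$ on each $z_j$ after confluence, and the sign conventions that send the $\beta$-shifts to $-z_3,-z_4$ inside the Vandermonde) come out as written; this is the purely formal verification of the recipe of \cite{CFKRS} specialized to $k=2$. I expect no genuine \emph{analytic} obstacle: the single hard input — a power-saving error term uniform over $\theta<\tfrac14$ — is imported wholesale from \cite{BBLR}.
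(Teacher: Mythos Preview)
Your proposal is correct and follows essentially the same route as the paper: quote the shifted twisted fourth-moment asymptotic from \cite{BBLR}, recast its six-term main term as a multiple contour integral via the combinatorial lemma of \cite{cfkrs} (their Lemma~2.5.1), and then set all shifts to zero. The paper's own proof is just the two-sentence version of what you wrote out in detail.
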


\begin{proof}
  Theorem 1 in \cite{BBLR} gives an asymptotic formula for
\begin{equation*} \label{shifts}
\int_{{\Bbb R}}  \zeta(\tfrac 12 + \alpha_1 + it)\zeta(\tfrac 12 + \alpha_2 + it) \zeta(\tfrac 12 + \alpha_3 - it)\zeta(\tfrac 12 + \alpha_4 - it) \Big | \sum_{n \leqs T^{\theta}} \frac{a(n)}{n^{1/2 + it}} \Big |^2 \Phi\Big(\frac tT\Big) dt,  
\end{equation*}
with $\alpha_1, \alpha_2, \alpha_3, \alpha_4$ complex numbers of modulus $\ll (\log T)^{-1}$. We apply Lemma 2.5.1 of \cite{cfkrs} to express that formula  in terms of a multiple contour integral. Setting all the shifts $\alpha_j$ equal to zero then gives the claim. 
\end{proof}

\section{Proof of Proposition \ref{prop2}}

Again we confine ourselves to proving the second assertion of the proposition; the first statement follows similarly.  
We apply Proposition \ref{prop3} with coefficients $a(n)$ given by 
$$
\sum_{n} \frac{a(n)}{n^s} = \Big ( \prod_{2\leqs j < v} \mathcal{N}_j(s; k-2) \Big ) \mathcal{P}_v(s)^{r},  
$$
and taking $\Phi$ to be a non-negative smooth function supported on $[1/2,4]$ with $\Phi(x) =1$ on $[1,2]$.  
On the circles $|z_j| = 3^j/\log T$ (for $1\leqs j\leqs 4$) we note that
$$
\Delta(z_1,z_2,-z_3,-z_4)^2 \ll (\log T)^{-12},   \ \ A_{z_1, z_2, z_3, z_4} \ll (\log T)^{4},
$$
and that
$$
 \int_{\Bbb R} \Phi\Big(\frac tT\Big) \prod_{j=1}^{4}\Big(\frac{t}{2\pi }\Big)^{z_j/2} dt \ll T. 
$$ 
Therefore by Proposition \ref{prop3} we conclude that 
\begin{equation*}\label{I_2 initial bound}
\begin{split}
     \int_{T}^{2T} |\zeta(\tfrac 12 + it)|^4  \Big | \sum_{n} \frac{a(n)}{n^{1/2 + it}} \Big |^2 dt \ll T(\log T)^4 \cdot \max_{|z_j| =3^j/\log T}|G(z_1, z_2, z_3, z_4)|
\end{split}
\end{equation*}   
where 
$$
G(z_1, z_2, z_3, z_4) = \sum_{n,m} \frac{a(n) a(m)}{[n,m]} B_{z_1,z_2,z_3,z_4} \Big ( \frac{n}{(n,m)} \Big ) B_{z_3,z_4,z_1,z_2} \Big ( \frac{m}{(n,m)} \Big ).   
$$
The estimate in Proposition \ref{prop2} will now follow once we establish the bound 
\begin{equation}\label{F bound}
      G(z_1, z_2, z_3, z_4)\ll (\log T_{v - 1})^{k^2-4} \Big( 18^r r! P_v^r \exp(P_v)\Big), 
\end{equation}
when $|z_j| =3^j/\log T$ for $1 \leqs j \leqs 4$.

From the multiplicative nature of the coefficients $a$, and $B_{z_1,z_2,z_3,z_4}$, we may express $G(z_1, z_2, z_3, z_4)$ as the product of 
\begin{equation} 
\label{6.3} 
\prod_{2 \leqs j < v} \Big ( \sum_{\substack{p | n,m \implies T_{j - 1} \leqs p < T_j \\ \Omega(n), \Omega(m) \leqs 500 P_j}} \frac{(k - 2)^{\Omega(n) + \Omega(m)} g(n) g(m)}{[n,m]} B_{z_1, z_2, z_3, z_4}\Big(\frac{n}{(m,n)}\Big) B_{z_3, z_4, z_1, z_2}\Big(\frac {m}{(m,n)}\Big) \Big ), 
\end{equation} 
and 
\begin{equation} 
\label{6.4} 
 \sum_{\substack{p|mn \implies T_{v - 1} \leqs p \leq T_v \\ \Omega(m) =\Omega(n)=r}} \frac{r!^2 g(m)g(n)}{[m,n]} 
 B_{z_1, z_2, z_3, z_4} \Big ( \frac{n}{(n,m)}\Big ) B_{z_3, z_4, z_1, z_2} \Big ( \frac{m}{(m,n)} \Big ). 
 \end{equation}

We now estimate the quantities in \eqref{6.3} and \eqref{6.4}.  To do this, it is helpful to note that from the definition \eqref{5.1} 
one has for $p\leqs T^{10^{-8}}$ and $|z_j| =3^j/\log T$ 
\begin{equation} 
\label{6.5} 
B_{z_1, z_2, z_3, z_4}(p^u) = \sigma_{z_1,z_2}(p^u) \Big( 1+ O\Big(\frac 1p\Big)\Big), 
\end{equation}  
from which we may deduce that 
\begin{equation} 
\label{6.51} 
|B_{z_1,z_2,z_3,z_4}(n)| \ll d_3(n) \leqs 3^{\Omega(n)},
\end{equation} 
for integers $n$ composed only of primes below $T^{10^{-8}}$, and where $d_3$ denotes the $3$--divisor function.   

Consider first the expression in \eqref{6.4}.   Using \eqref{6.51} we have $|B_{z_1,z_2,z_3,z_4}(n/(n,m))| \ll 3^r$ and $|B_{z_3,z_4,z_1,z_2}(m/(n,m))|\ll 3^r$, and 
so the quantity in \eqref{6.4} is 
\begin{align*}
&\ll 9^r \sum_{\substack{p|mn \implies T_{v - 1} \leqs p < T_v \\ \Omega(m) =\Omega(n)=r}} \frac{r!^2 g(m)g(n)}{[m,n]}\\
&  \leqs 9^r r!^2
\sum_{j=0}^{r} \sum_{\substack{p|d \implies T_{v-1} \leqs p < T_v \\ \Omega(d)=j}}  \frac{1}{d} \Big( \sum_{\substack{p|n \implies T_{v-1} \leqs p < T_v \\ \Omega(n)=r-j }} \frac{g(nd)}{n}\Big)^2.  
\end{align*} 
Since $g(nd) \leqs g(n)$, the above may be bounded by
\begin{equation} 
\label{6.6} 
\leqs 9^r r!^2 \sum_{j=0}^{r} \Big( \frac{1}{j!} P_v^j \Big) \Big( \frac{1}{(r-j)!} P_v^{r-j} \Big)^2 = 9^r r! P_v^r  \sum_{j=0}^{r} \binom{r}{j} \frac{P_v^{r-j}}{(r-j)!} \leqs 
18^r r! P_v^r \exp(P_v), 
\end{equation}  
upon noting that $\binom{r}{j} \leqs 2^r$ and $\sum_{j=0}^r P_v^{r-j}/(r-j)! \leqs \exp(P_v)$.

Now we turn to the expression in \eqref{6.3}, treating the contribution for a given $j$ in the range $2\leqs j <v$.  First we show that the constraints $\Omega(n)$ and $\Omega(m) 
\leqs 500P_j$ may be dropped from the expression there with negligible error.   
We bound these terms using Rankin's trick, in the form $\exp(\Omega(m) +\Omega(n) -500 P_j) \geqs 1$ if 
either $\Omega(m)$ or $\Omega(n)$ exceeds $500P_j$.   By \eqref{6.51} and since $|k-2| \leqs 2$, the error induced in dropping the constraint on $\Omega(m)$ and $\Omega(n)$ is 
\begin{align*}
&\leqs e^{-500 P_j} \sum_{p| m, n \implies T_{j-1} \leqs p < T_j} \frac{(2e)^{\Omega(m)+\Omega(n)}}{ [m,n]} d_3(m)d_3(n) 
\\
&\ll e^{-500 P_j} \prod_{T_{j-1} \leqs p < T_j} \Big( 1+ \frac{6e + 6e + (6e)^2}{p} +O\Big(\frac{1}{p^2} \Big) \Big) \ll e^{-100 P_j}.
\end{align*}
After discarding the constraint on $\Omega(m)$ and $\Omega(n)$, the contribution of the term in \eqref{6.3} is 
$$ 
\prod_{T_{j-1} \leqs p < T_j} \Big( \sum_{a,b=0}^{\infty} \frac{(k-2)^{a+b} g(p^{a}) g(p^b)}{p^{\max(a,b)}} B_{z_1,z_2,z_3,z_4}(p^{a-\min(a,b)}) 
B_{z_3,z_4,z_1,z_2} (p^{b-\min(a,b)})\Big).
$$  
Upon using \eqref{6.5}, we see that only the terms $a, b = 0$, or $1$ are relevant and the total contribution is 
\begin{align*}
&\prod_{T_{j-1} \leqs p < T_j} \Big( 1 + \frac{(k-2)(\sigma_{z_1,z_2}(p)+\sigma_{z_3,z_4}(p)) + (k-2)^2}{p} + O\Big(\frac{1}{p^2}\Big)\Big) \\
=& \prod_{T_{j-1} \leqs p < T_j}  \Big( 1+ \frac{k^2 -4}{p} +O\Big(\frac{1}{p^2} + \frac{\log p}{p\log T}\Big)\Big),  
\end{align*} 
since $\sigma_{z_1,z_2}(p) = p^{-z_1} + p^{-z_2} = 2 +O(\log p/\log T)$, and similarly for $\sigma_{z_3,z_4}(p)$.  
We conclude that the expression in \eqref{6.3} equals 
$$ 
\prod_{2\leqs j<v} \Big(\prod_{T_{j-1} \leqs p < T_j}  \Big( 1+ \frac{k^2 -4}{p} +O\Big(\frac{1}{p^2} + \frac{\log p}{p\log T} \Big)\Big)  +O(e^{-100 P_j})\Big) \ll (\log T_{v-1})^{k^2-4}.  
$$ 
Combining this estimate with \eqref{6.6}, the bound \eqref{F bound} follows, and with it the proof of Proposition \ref{prop2} is complete.

\end{document}